\tikzstyle{printersafe}=[snake=snake,segment amplitude=0 pt]
\newtheorem{problem}{\em Problem}
\newtheorem{theorem}{\em Theorem}
\newtheorem{lemma}{\em Lemma}
\newtheorem{corollary}{\em Corollary}
\newtheorem{observation}{\em Observation}
\journal{Latex Templates}
\begin{document}

\begin{frontmatter}

\title{Three results towards the approximation of special maximum matchings in graphs}

\author[label1]{Vahan Mkrtchyan}
\address[label1]{Department of Mathematics and Computer Science,\\ College of the Holy Cross, Worcester, MA, USA}

%
\ead{vahan.mkrtchyan@gssi.it}
%
%

\begin{abstract}
For a graph $G$ define the parameters $\ell(G)$ and $L(G)$ as the minimum and maximum value of $\nu(G\backslash F)$, where $F$ is a maximum matching of $G$ and $\nu(G)$ is the matching number of $G$. In this paper, we show that there is a small constant $c>0$, such that the following decision problem is NP-complete: given a graph $G$ and $k\leq \frac{|V|}{2}$, check whether there is a maximum matching $F$ in $G$, such that $|\nu(G\backslash F)-k|\leq c\cdot |V|$. Note that when $c=1$, this problem is polynomial time solvable as we observe in the paper. Since in any graph $G$, we have $L(G)\leq 2\ell(G)$, any polynomial time algorithm constructing a maximum matching of a graph is a 2-approximation algorithm for $\ell(G)$ and $\frac{1}{2}$-approximation algorithm for $L(G)$. We complement these observations by presenting two inapproximability results for $\ell(G)$ and $L(G)$.
\end{abstract}

\begin{keyword}
Matching \sep maximum matching \sep perfect matching \sep NP-completeness.
\MSC[2020] 05C85 \sep 68R10 \sep 05C70 \sep 05C15.
\end{keyword}

\end{frontmatter}



\section{Introduction}
\label{IntroSection}

In this paper, we consider finite, undirected graphs without loops or parallel edges. The set of vertices and edges of a graph $G$ is denoted by $V(G)$ and $E(G)$, respectively. The degree of a vertex $v$ of $G$ is denoted by $d_{G}(v)$. Let $\Delta(G)$ and $\delta(G)$ be the maximum and minimum degree of a vertex of $G$. A graph is cubic if $\delta(G)=\Delta(G)=3$. A graph is bipartite if its set of vertices can be divided into two disjoint sets $V_1$ and $V_2$, such that every edge connects a vertex in $V_1$ to one in $V_2$. A graph is called a forest if it does not contain a cycle. Note that a forest can be disconnected. In a special case when it is connected, the graph is called a tree.

A matching in a graph $G$ is a subset of edges such that no vertex of $G$ is incident to two edges from the subset. A maximum matching is a matching that contains maximum possible number of edges. A maximum matching is perfect if every vertex of the graph is incident to an edge from the perfect matching. 

If $k\geq 0$, then a graph $G$ is called {$k$-edge colorable}, if its edges can be assigned colors from a set of $k$ colors so that adjacent
edges receive different colors. The smallest integer $k$, such that $G$ is $k$-edge-colorable is called chromatic index of $G$ and is denoted by $\chi'(G)$. The classical theorem of Shannon states that for any multi-graph $G$, $\Delta(G)\leq \chi'(G) \leq \left \lfloor \frac{3\Delta(G)}{2} \right \rfloor$ \cite{Shannon:1949,stiebitz:2012}. On the other hand, the classical theorem of Vizing states that for any multi-graph $G$, $\Delta(G)\leq \chi'(G) \leq \Delta(G)+\mu(G)$ \cite{stiebitz:2012,vizing:1964}. Here $\mu(G)$ is the maximum multiplicity of an edge of $G$. Note that when $G$ is a (simple) graph, this theorem implies that 
\[\Delta(G)\leq \chi'(G) \leq \Delta(G)+1.\]
So in particular, when $G$ is a cubic graph, we have that $3\leq \chi'(G) \leq 4$. The problem of deciding whether a cubic graph $G$ has $\chi'(G)=3$ is an NP-complete problem as demonstrated by Holyer \cite{Holyer}.



If $k<\chi'(G)$, we cannot color all edges of $G$ with $k$ colors. Thus it is reasonable to investigate the maximum number of edges that one can color with $k$ colors. A subgraph $H$ of a graph $G$ is called {maximum $k$-edge-colorable}, if $H$ is $k$-edge-colorable and contains maximum number of edges among all $k$-edge-colorable subgraphs. For $k\geq 0$ and a graph $G$, let
\[\nu_{k}(G) = \max \{ |E(H)| : H \text{ is a $k$-edge-colorable subgraph of } G \}. \]
Clearly, a $k$-edge-colorable subgraph is maximum if it contains exactly $\nu_k(G)$ edges. Note that $\nu_1(G)$ is the size of the maximum matching in $G$. Usually, we will shorten the notation $\nu_1(G)$ to $\nu(G)$.

It may seem that if we have a maximum $k$-edge-colorable subgraph of a graph, then by adding some edges to it, we can get a maximum $(k+1)$-edge-colorable subgraph. The example from Figure \ref{fig:Examplenu1nu2} shows that this is not true. It has a unique perfect matching, a matching covering all vertices of the graph, which contains the edge joining the two degree three vertices. However, the unique maximum $2$-edge-colorable subgraph of it contains all its eight edges except the edge joining the two degree three vertices.

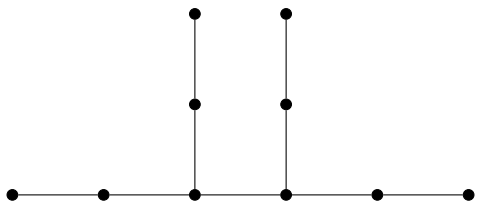
\begin{figure}[ht]
\centering
  
  \begin{center}

		\begin{tikzpicture}[scale = 0.6]
			
			
			
			

			\tikzstyle{every node}=[circle, draw, fill=black!50,inner sep=0pt, minimum width=4pt]
																							
			\node[circle,fill=black,draw] at (0,0) (n00) {};
			
			\node[circle,fill=black,draw] at (-2, 0) (nm20) {};
																								
			\node[circle,fill=black,draw] at (-4,0) (nm40) {};

                \node[circle,fill=black,draw] at (0,2) (n02) {};

                \node[circle,fill=black,draw] at (0,4) (n04) {};
                

                \node[circle,fill=black,draw] at (2,0) (n20) {};

                \node[circle,fill=black,draw] at (4,0) (n40) {};

                \node[circle,fill=black,draw] at (6,0) (n60) {};

                \node[circle,fill=black,draw] at (2,2) (n22) {};

                \node[circle,fill=black,draw] at (2,4) (n24) {};

			


			\path[every node]
			
			(n00) edge (nm20)
                (nm20) edge (nm40)

                (n00) edge (n02)
                (n02) edge (n04)

                (n00) edge (n20)
   

                (n20) edge (n40)
                (n40) edge (n60)
			
			(n20) edge (n22)
                (n22) edge (n24)

			;
		\end{tikzpicture}
																
	\end{center}
								
	\caption{A graph in which the maximum matching is not a subset of a maximum $2$-edge-colorable subgraph.}
	\label{fig:Examplenu1nu2}
\end{figure} 

In this paper, we deal with the following problem. Let $G$ be a graph. Define:
\[\ell(G)=\min\{\nu(G\backslash F): F\text{ is a maximum matching of }G\},\]
and
\[L(G)=\max\{\nu(G\backslash F): F\text{ is a maximum matching of }G\}.\]

Note that if $G$ is the path of length four (Figure \ref{fig:Example4path}), then $\ell(G)=1$ and $L(G)=2$. In the Master thesis of the author \cite{KM2008}, the problem of computing $\ell(G)$ and $L(G)$ is considered. The main contributions of the paper are that both of these parameters are NP-hard to compute in the connected, bipartite graphs of maximum degree three. On the positive side, \cite{KM2007} shows that both of the parameters are polynomial time computable when the input graph is a tree.

\cite{KM2008Artur} shows that in any graph $G$, $L(G)\leq 2\cdot \ell(G)$. Moreover, $L(G)\leq \frac{3}{2}\cdot \ell(G)$ provided that the graph $G$ contains a perfect matching \cite{KM2008Artur}. Finally, in \cite{KM2008Artur} the graphs with $L(G)= 2\ell(G)$ are characterized. This characterization implies that there is a polynomial time algorithm to test a given graph $G$ for $L(G)= 2\ell(G)$. Note that the situation is completely different for graphs containing a perfect matching. \cite{KM2008Artur} shows that the problem of testing a given graph $G$ with a perfect matching for $L(G)= \frac{3}{2}\ell(G)$ is NP-complete already in bridgeless cubic graphs. Note that these graphs always have a perfect matching by the classical theorem of Petersen \cite{Lovasz}.

\begin{figure}[ht]
\centering
  
  \begin{center}

		\begin{tikzpicture}[scale = 0.6]
			
			
			
			

			\tikzstyle{every node}=[circle, draw, fill=black!50,inner sep=0pt, minimum width=4pt]
																							
			\node[circle,fill=black,draw] at (0,0) (n00) {};

                \node[circle,fill=black,draw] at (2,0) (n20) {};

                \node[circle,fill=black,draw] at (4,0) (n40) {};

                \node[circle,fill=black,draw] at (6,0) (n60) {};

                \node[circle,fill=black,draw] at (8,0) (n80) {};



			\path[every node]
			
			(n00) edge (n20)
                (n20) edge (n40)
                (n40) edge (n60)
                (n60) edge (n80)

			;
		\end{tikzpicture}
																
	\end{center}
								
	\caption{$\ell(G)=1$ and $L(G)=2$ when $G$ is the path of length four.}
	\label{fig:Example4path}
\end{figure}
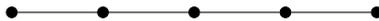

Since $L(G)$ is NP-hard to compute in the class of bipartite graphs, one may try to obtain some bounds for it. The main contribution of \cite{KM2005} is obtaining an upper bound for $L(G)$ in the class of bipartite graphs. The idea there is that in bipartite graphs $G$ one can compute $\nu_2(G)$ in polynomial time. This result is derived via maximum flows in networks. One can find its description in \cite{MK2024}, too (see Observation 2 there). Let us note that \cite{MK2006} shows that $\nu_2(G)=\nu(G)+L(G)$ in the class of trees in which the distance between any two pendant vertices is even. Recall that these trees are exactly the block-cut point graphs of arbitrary graphs \cite{Harary}. Note that this result does not hold for any tree as the example from Figure \ref{fig:Examplenu1nu2} demonstrates. In it $\nu_2(G)=8$, $\nu(G)=5$ and $L(G)=2$.

The problem of construction matchings with constraints is not new. See papers \cite{Cameron1,Cameron2,GHHL2005,Golumbic,IRT1978,Monnot} for other results that deal with the problem of construction matchings with restrictions. For the notions, facts and concepts that are not explained in the paper the reader is referred to the graph theory monograph \cite{west:1996}.

\section{Main results}
\label{MainSection}

In this section, we obtain the main results of the paper. We start with the additive approximation of our problem. More precisely, we will consider the following decision problem: 
\begin{problem}
    \label{prob:main} For a given graph $G$ and an integer $k$ with $k\leq \frac{|V|}{2}$, does there exist a maximum matching $F$, such that
\[|\nu(G\backslash F)-k|\leq f(|V|)?\]
\end{problem} Here $f$ is a fixed polynomial time computable function. Our first observation states:
\begin{observation}
    \label{obs:|V|=1} If $f(x)=x$, then Problem \ref{prob:main} can be solved in polynomial time.
\end{observation}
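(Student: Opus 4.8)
The plan is to observe that, with $f(x)=x$, the permitted additive slack $f(|V|)=|V|$ already swamps the entire range of values that $\nu(G\backslash F)$ can attain; consequently the answer is \emph{yes} for every valid instance, and a polynomial-time algorithm need only output \emph{yes}.

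To justify this, I would first note that every graph has at least one maximum matching $F$ (the empty matching if $G$ has no edges), so the existential quantifier in Problem \ref{prob:main} is never vacuous. I would then pin down the range of $\nu(G\backslash F)$: on one side, a matching number is nonnegative, so $\nu(G\backslash F)\geq 0$; on the other side, deleting edges cannot increase the matching number, so $\nu(G\backslash F)\leq \nu(G)$, and since a matching covers at most $|V|$ vertices we have $\nu(G)\leq \lfloor |V|/2\rfloor$. Hence $0\leq \nu(G\backslash F)\leq |V|/2$ for \emph{every} maximum matching $F$.

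Finally, the instance supplies an integer $k$ with $0\leq k\leq |V|/2$ (the natural range for a target matching number), so $k$ and $\nu(G\backslash F)$ both lie in the interval $[0,|V|/2]$. Their distance is therefore at most $|V|/2\leq |V|=f(|V|)$, which means the inequality $|\nu(G\backslash F)-k|\leq f(|V|)$ holds for an arbitrary maximum matching $F$. The answer is thus always \emph{yes}, and the trivial algorithm that outputs \emph{yes} runs in constant time.

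I do not expect any real obstacle here: the whole content of the statement is the remark that the slack $|V|$ dwarfs the diameter $|V|/2$ of the interval of achievable values, so no matching actually has to be computed. The point of isolating this case is contrast, namely to explain why the constant $c=1$ is easy, whereas the main theorem shows that shrinking the slack to $c\cdot|V|$ for a sufficiently small constant $c>0$ makes the same decision problem NP-complete.
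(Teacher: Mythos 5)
Your proposal is correct and follows essentially the same argument as the paper: both bound $\nu(G\backslash F)$ and $k$ by $|V|/2$ and conclude that the slack $f(|V|)=|V|$ makes every instance a trivial ``yes''-instance. The only cosmetic difference is that you bound the distance by the interval length $|V|/2$, whereas the paper uses the triangle-inequality bound $\nu(G\backslash F)+k\leq |V|$; both are immediate and equivalent in effect.
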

\begin{proof} Note that for any graph $G$ and its maximum matching $F$
\[\nu(G\backslash F)\leq \nu(G)\leq \frac{|V|}{2}.\]
Since $k\leq \frac{|V|}{2}$, we get:
\[|\nu(G\backslash F)-k|\leq \nu(G\backslash F)+k\leq \frac{|V|}{2}+\frac{|V|}{2}=|V|=f(|V|). \]
Thus, all instances of this problem have a trivial ``yes"-answer. The proof is complete. 
\end{proof}

Now, we are going to complement this observation with the following negative result. Its proof requires the following result from \cite{Hastad2001} whose proof relies on the classical PCP theorem \cite{CompComplexityAroraBarak2009}.

\begin{lemma}
    \label{lem:3SATinapprox} (Theorem 6.5 from \cite{Hastad2001}) Suppose $X=\{x_1,...,x_n\}$ is a set of boolean variables and $C=\{C_1,...,C_m\}$ is a set of clauses each of which contains exactly three literals of variables from $X$. Suppose that our instances are such that either all $m$ clauses are satisfiable, or at most $\left(\frac{7}{8}+\varepsilon\right)\cdot m$ clauses are satisfiable by any truth assignment. Here $\varepsilon\in \left(0,\frac{1}{8}\right)$ is any number. The goal is to decide which of these two cases hold for a given instance. This problem is NP-hard.
\end{lemma}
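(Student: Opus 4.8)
The plan is to obtain this as the culmination of the PCP machinery. Starting from the PCP theorem, I would first reduce to a gap version of \emph{Label Cover} with arbitrarily small soundness, then encode an alleged proof in the Long Code and design a $3$-query probabilistically checkable test whose acceptance predicate is logically equivalent to a conjunction of disjunctions of exactly three literals. The point is to arrange that the fraction of simultaneously satisfiable clauses is controlled by the verifier's acceptance probability, so that the NP-hard gap for the test transfers to an NP-hard gap for E3SAT.

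Concretely, the first step is to invoke the PCP theorem together with Raz's parallel repetition theorem to obtain, for every $\delta>0$, the NP-hardness of distinguishing a fully satisfiable instance of Label Cover (a bipartite system of projection constraints between variables taking labels in $[L]$ and $[R]$) from one in which no labeling satisfies more than a $\delta$ fraction of the constraints. This gap-Label-Cover problem is the standard starting point, and the parameter $\delta$ is chosen only at the very end, small enough relative to $\varepsilon$. The second step is the Long Code construction: for each variable I would introduce a supposed Long Code of its label, i.e. a Boolean function $f\colon\{-1,1\}^{R}\to\{-1,1\}$ intended to be a dictator $f(x)=x_i$. I would force each table to be \emph{folded} (so that $f(-x)=-f(x)$, which kills the degree-zero Fourier coefficient), and then have the verifier pick a random edge of the Label Cover instance, sample the query points with a small independent noise, and accept according to a predicate that is an AND of $3$-clauses. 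With an honest dictator encoding this predicate is always satisfied, giving \emph{perfect completeness}, which is exactly what matches the ``all $m$ clauses satisfiable'' case of the statement.

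The heart of the argument, and the step I expect to be the main obstacle, is the soundness analysis via discrete Fourier analysis. Here I would arithmetize the acceptance indicator, expand the expected acceptance probability in the Fourier characters of the two Long Code tables across a random edge, and use the folding together with the injected noise to show that every cross term is damped unless the two tables share heavily correlated high-weight coefficients. The delicate point is to prove that if the test accepts with probability exceeding $\frac{7}{8}+\varepsilon$, then these surviving Fourier coefficients can be \emph{decoded} into a randomized labeling that satisfies more than a $\delta$ fraction of the Label Cover constraints, contradicting the gap from the first step. Calibrating the noise rate and $\delta$ so that the soundness lands exactly at $\frac{7}{8}$, the fraction of assignments satisfying a random $3$-clause, is what makes the analysis tight.

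Finally, I would translate the constant-size acceptance predicate queried at each step into a constant number of clauses of exactly three literals, so that satisfying a $\left(\frac{7}{8}+\varepsilon\right)$-fraction of the resulting E3SAT clauses would force the verifier to accept too often. Combined with perfect completeness on satisfiable Label Cover instances, this yields precisely the claimed dichotomy, namely that all $m$ clauses are satisfiable in the \emph{yes} case while no assignment satisfies more than $\left(\frac{7}{8}+\varepsilon\right)\cdot m$ clauses in the \emph{no} case, completing the reduction and establishing NP-hardness.
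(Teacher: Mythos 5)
The paper does not actually prove this lemma: it is imported verbatim as Theorem~6.5 of H{\aa}stad \cite{Hastad2001}, with only the remark that its proof rests on the PCP theorem. So there is no in-paper argument to compare yours against; what you have written is an outline of H{\aa}stad's own proof, and at the level of architecture it is the right one --- PCP theorem plus Raz's parallel repetition to get gap Label Cover with arbitrarily small soundness $\delta$, folded Long Codes, a $3$-query test with perfect completeness, and a Fourier-analytic soundness argument that decodes heavy correlated coefficients into a labeling beating the $\delta$ gap. Given that the paper treats the lemma as a black box, citing \cite{Hastad2001} is both sufficient and preferable to reproving it.

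That said, two points in your sketch deserve flagging. First, the claim that you can perturb the query points with ``small independent noise'' and still have an honest dictator pair satisfy the predicate always is wrong as stated: independent noise is precisely what destroys perfect completeness in H{\aa}stad's $3$-LIN test (whose completeness is only $1-\varepsilon$), and the entire difficulty of Theorem~6.5 --- as opposed to the weaker gap $\bigl(1-\varepsilon,\ \frac{7}{8}+\varepsilon\bigr)$ obtained by rewriting the $3$-LIN result as clauses --- is to design a correlated, conditioned query distribution (the third query point is built coordinate-wise from the other two so that honest encodings always satisfy the clause) and then to carry out the Fourier analysis over this non-product distribution. If you run your sketch literally, you lose the ``all $m$ clauses satisfiable'' side of the dichotomy, which is exactly the side this paper uses in case (A) of the proof of Theorem~\ref{thm:AdditiveApprox}. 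Second, you explicitly defer the soundness analysis, which you correctly identify as the heart of the matter; as it stands, your text is a correct plan for H{\aa}stad's proof rather than a proof, and the deferred step is where essentially all of the work lies.
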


\begin{theorem}
    \label{thm:AdditiveApprox} If $f(x)=c\cdot x$, where $c\in \left(0, \frac{1}{256}\right)$, then Problem \ref{prob:main} is NP-complete in the class of connected bipartite graphs.
\end{theorem}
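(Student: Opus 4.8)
The plan is a gap-preserving reduction from the gap version of \textsc{Max-3Sat} in Lemma~\ref{lem:3SATinapprox}; membership in NP comes first. A maximum matching $F$ is a polynomial-size witness: given $F$ one checks in polynomial time that $F$ is maximum (compute $\nu(G)$ by Edmonds' algorithm and compare sizes), computes $\nu(G\backslash F)$ by a second matching computation, and verifies $|\nu(G\backslash F)-k|\le c\cdot|V|$. So Problem~\ref{prob:main} lies in NP, and it remains to prove NP-hardness. For that, fix a small $\varepsilon\in(0,\tfrac18)$ and, from an instance $(X,C)$ of Lemma~\ref{lem:3SATinapprox} with $n$ variables and $m$ clauses, build in polynomial time a connected bipartite graph $G$ and a target $k\le|V|/2$. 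The design goal is a constant $B$ and a constant $\alpha\le 32$ with $|V|\le\alpha m$ so that, writing $s(\phi)$ for the number of clauses satisfied by a truth assignment $\phi$, two properties hold: (A) for every assignment $\phi$ there is a maximum matching $F_\phi$ with $\nu(G\backslash F_\phi)=B+\bigl(m-s(\phi)\bigr)$; and (B) every maximum matching $F$ induces an assignment $\phi_F$ with $\nu(G\backslash F)\ge B+\bigl(m-s(\phi_F)\bigr)$. I would then set $k=B$, which is admissible since $B=\nu(G\backslash F_\phi)\le\nu(G)\le|V|/2$ for a satisfying $\phi$.

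The graph $G$ is assembled from bounded-size gadgets. Each variable $x_i$ gets a gadget built on an even cycle whose two perfect matchings encode $x_i=\text{true}$ and $x_i=\text{false}$; copies of the literal are distributed around the cycle so that all occurrences of $x_i$ are forced to agree, and each occurrence exposes a terminal vertex left unmatched by the internal matching exactly when its literal is false. Each clause $C_j$ gets a gadget modelled on the length-four path of Figure~\ref{fig:Example4path} (for which $\ell=1,L=2$), attached to the three literal terminals; it is engineered so that, after deleting a maximum matching, it contributes a fixed amount to $\nu(G\backslash F)$ when at least one literal is true and exactly one more when all three are false, making unsatisfied clauses the only source of extra edges in the complement. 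The gadgets are linked into a single connected bipartite graph by even-length connector paths whose matching behaviour is fixed independently of the encoded assignment, so they contribute only to the constant $B$; bipartiteness is kept throughout by using only even cycles and even connectors. Keeping every gadget of bounded size, with high-occurrence variables handled by the cycle-consistency mechanism, gives $|V|\le\alpha m$ with $\alpha\le 32$, which is precisely the accounting behind the value $\tfrac1{256}$.

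Granting (A) and (B), the gap argument is immediate. In the \emph{yes} case the formula is fully satisfiable, so a satisfying $\phi$ yields, by (A), a maximum matching with $\nu(G\backslash F_\phi)=B=k$, whence $|\nu(G\backslash F_\phi)-k|=0\le c|V|$ and Problem~\ref{prob:main} answers ``yes''. In the \emph{no} case no assignment satisfies more than $(\tfrac78+\varepsilon)m$ clauses, so $m-s(\phi_F)\ge(\tfrac18-\varepsilon)m$ for every $F$; by (B) this gives $\nu(G\backslash F)\ge k+(\tfrac18-\varepsilon)m$ for \emph{every} maximum matching, hence $|\nu(G\backslash F)-k|\ge(\tfrac18-\varepsilon)m$. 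Since $|V|\le 32m$, for any $c<\tfrac1{256}$ one may fix $\varepsilon$ with $\tfrac18-\varepsilon>32c$, so that $(\tfrac18-\varepsilon)m>c|V|$ and Problem~\ref{prob:main} answers ``no''. Thus the reduction sends yes-instances to yes-instances and no-instances to no-instances, and NP-hardness follows from Lemma~\ref{lem:3SATinapprox}.

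The main obstacle is establishing direction (B) together with the size bound. I must make the variable gadgets rigid enough that \emph{every} maximum matching of $G$---not just the intended ones---projects to a consistent Boolean assignment, so a matching cannot ``cheat'' by decoupling a variable's occurrences in order to lower $\nu(G\backslash F)$; and the clause gadgets must make the complement-matching contribution exactly additive in the number of unsatisfied clauses. Balancing this rigidity against the requirement $|V|\le 32m$, so that the gap-to-size ratio reaches $\tfrac1{256}$, while preserving bipartiteness and connectivity, is where the real work lies. By contrast, direction (A) and membership in NP are comparatively routine.
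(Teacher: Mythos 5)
Your overall strategy coincides with the paper's: membership in NP via two matching computations, then a gap-preserving reduction from the H\aa stad gap version of Max-3SAT (Lemma~\ref{lem:3SATinapprox}), a construction of constant size per clause with $|V|\le 32m$, a target $k$ equal to the value achieved by matchings arising from satisfying assignments, and the closing arithmetic requiring $\frac{1}{8}-\varepsilon>32c$, which is exactly the source of the threshold $\frac{1}{256}$ in the paper. The logical framework you set up is sound: with your properties (A) and (B) and $k=B$, yes-instances map to yes-instances and no-instances to no-instances (the fact that in your design unsatisfied clauses push $\nu(G\backslash F)$ up, whereas in the paper's construction they push it down from $k=11m-1$, is immaterial).

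However, there is a genuine gap: the entire technical core of the theorem is postulated rather than proved. You never exhibit the gadgets; you state as ``design goals'' that variable gadgets, clause gadgets and connectors exist satisfying (A), (B) and $|V|\le 32m$, and you yourself flag that establishing (B) ``is where the real work lies.'' Property (B) is precisely the hard rigidity statement: one must rule out that some maximum matching leaves gadget vertices unmatched, or mixes edges across gadgets, so that it fails to define a consistent assignment or achieves a count outside the intended range. The paper secures this with a concrete coordinate-based construction (Figures~\ref{fig:Gadgeta}, \ref{fig:Gadgetb}, \ref{fig:ConventionalSign}, \ref{fig:GraphCorrespondingClause}, \ref{fig:GraphCorrespondingVariable}, \ref{fig:GraphConnected}) in which $\nu(G_I)=\frac{|V(G_I)|}{2}$, so every maximum matching is perfect; pendant-edge propagation along the attached path then forces the matching on the connector and on the clause spines, which in turn forces all edges $u_{11}u_{12}$, $u_{21}u_{22}$, leaving each variable cycle with exactly its two perfect matchings. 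This yields the induced assignment $\tilde{\gamma}$ and the exact count $\nu(G_I\backslash F)=2m-1+9\cdot|Sat(\tilde{\gamma})|+8\cdot\bigl(m-|Sat(\tilde{\gamma})|\bigr)=10m-1+|Sat(\tilde{\gamma})|$, together with the vertex count $|V(G_I)|=32m$ that drives the constant. Without an explicit construction, this forcing argument, and the verified edge counts per satisfied/unsatisfied clause gadget, your argument is a correct plan for the proof, not a proof.
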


\begin{proof} Note that our problem is in NP. This just follows from the polynomial solvability of the maximum matching problem \cite{Lovasz}, and the polynomial computability of the linear function $f(x)=c\cdot x$. Now, we are going to reduce the problem from Lemma \ref{lem:3SATinapprox} to our problem. We will use some ideas from the Master thesis of the author \cite{KM2008}.

\begin{figure}[ht]
\centering
\begin{minipage}[b]{.5\textwidth}
  \begin{center}
	\begin{tikzpicture}[scale = 0.6]
			
			
			
			

                \draw[->, thick] (-1, 0) -- (10,0);
                \draw[->, thick] (0, -1) -- (0, 10);

                \node at (5,-1) {$4i-1$};
                \node at (7,-1) {$4i$};

                \node at (-1,3) {$4j-3$};
                \node at (-1,5) {$4j-2$};
                \node at (-1,7) {$4j-1$};
                \node at (-1,9) {$4j$};

                \node at (5.65,3) {$u_{11}$};
                \node at (7.65,3) {$u_{21}$};
                \node at (5.65,5) {$u_{12}$};
                \node at (7.65,5) {$u_{22}$};

                \node at (7.65,7) {$v_{22}$};
                \node at (7.65,9) {$v_{12}$};
                \node at (5.65,6.65) {$v_{21}$};
                \node at (5.65,8.65) {$v_{11}$};

                \draw [dashed] (5,3) -- (5,0);
                \draw [dashed] (7,3) -- (7,0);

                \draw [dashed] (5,3) -- (0,3);
                \draw [dashed] (5,5) -- (0,5);
                \draw [dashed] (5,7) -- (0,7);
                \draw [dashed] (5,9) -- (0,9);

			\tikzstyle{every node}=[circle, draw, fill=black!50,inner sep=0pt, minimum width=4pt]
																							
			\node[circle,fill=black,draw] at (5,3) (u11) {};
                \node[circle,fill=black,draw] at (5,5) (u12) {};
                \node[circle,fill=black,draw] at (7,3) (u21) {};
                \node[circle,fill=black,draw] at (7,5) (u22) {};

                \node[circle,fill=black,draw] at (5,9) (v11) {};
                \node[circle,fill=black,draw] at (7,9) (v12) {};
                \node[circle,fill=black,draw] at (5,7) (v21) {};
                \node[circle,fill=black,draw] at (7,7) (v22) {};



			\path[every node]
			
			(u11) edge (u12)
                (u21) edge (u22)
                (u12) edge (v21)
                (u22) edge (v22)

                (v21) edge (v22)
                (v22) edge (v12)
                (v11) edge (v12)

			;
		\end{tikzpicture}

	\end{center}
	
	\caption{The gadget corresponding to the variable\\ $x_i$ and the clause $C_j$.}\label{fig:Gadgeta}
\end{minipage}%
\begin{minipage}[b]{.5\textwidth}
  	\begin{center}
	\centering
  
  \begin{center}

		\begin{tikzpicture}[scale = 0.6]
			
			
			
			

                \draw[->, thick] (-1, 0) -- (10,0);
                \draw[->, thick] (0, -1) -- (0, 10);

                \node at (2.65,-1) {$4i-3$};
                \node at (5,-1) {$4i-2$};
                 \node at (7.35,-1) {$4i-1$};
                \node at (9.35,-1) {$4i$};

                \node at (-1,5) {$4j-1$};
                \node at (-1,7) {$4j$};

                \node at (3,5.35) {$u_{11}$};
                \node at (3, 7.35) {$u_{21}$};
                \node at (5,5.35) {$u_{12}$};
                \node at (5,7.35) {$u_{22}$};

                \node at (10,5.35) {$v_{22}$};
                \node at (9.65,7.35) {$v_{12}$};
                \node at (7.65,5.35) {$v_{21}$};
                \node at (7.65,7.35) {$v_{11}$};

                \draw [dashed] (2.65,5) -- (2.65,0);
                \draw [dashed] (5,5) -- (5,0);
                \draw [dashed] (7.35,5) -- (7.35,0);
                \draw [dashed] (9.35,5) -- (9.35,0);

                 \draw [dashed] (2.65,5) -- (0,5);
                 \draw [dashed] (2.65,7) -- (0,7);

			\tikzstyle{every node}=[circle, draw, fill=black!50,inner sep=0pt, minimum width=4pt]
																							
			\node[circle,fill=black,draw] at (2.65,5) (u11) {};
                \node[circle,fill=black,draw] at (5,5) (u12) {};
                \node[circle,fill=black,draw] at (2.65,7) (u21) {};
                \node[circle,fill=black,draw] at (5,7) (u22) {};

                \node[circle,fill=black,draw] at (7.35,7) (v11) {};
                \node[circle,fill=black,draw] at (9.35,7) (v12) {};
                \node[circle,fill=black,draw] at (7.35,5) (v21) {};
                \node[circle,fill=black,draw] at (9.35,5) (v22) {};



			\path[every node]
			
			(u11) edge (u12)
                (u12) edge (v21)
                (v21) edge (v22)
                (v22) edge (v12)
                (v11) edge (v12)
                (v11) edge (u22)
                (u21) edge (u22)

			;
		\end{tikzpicture}
																
	\end{center}
								
	\caption{The gadget corresponding to the literal $\overline{x}_i$ and clause $C_j$.}
	\label{fig:Gadgetb}
	\end{center}
\end{minipage}
\end{figure}

Let $I=(X, C)$ be an instance of the first problem. Now we are going to describe a graph $G_I$ corresponding to it. Our graph is going to have vertices which will be integral points on the plane. In other words, our vertices will be pairs $(x,y)$ where both $x$ and $y$ are integers. 

Suppose $x_i$ ($1\leq i\leq n$) is a boolean variable from $X$ appearing in a clause $C_j$ ($1\leq j\leq m$) from $C$. If $x_i$ appears as a variable in $C_j$, then the graph corresponding to it is from Figure \ref{fig:Gadgeta}, and if $x_i$ appears as a negated variable in $C_j$, then the graph corresponding to it is from Figure \ref{fig:Gadgetb}. This graph is going to appear as a part of a larger graph. Sometimes, we will prefer not to draw the vertices $u_{ij}$ ($1\leq i,j \leq 2$). Thus, we will use a conventional sign for them. This sign is the one from Figure \ref{fig:ConventionalSign}. The vertices $v_{ij}$ ($1\leq i, j\leq 2$) are the ones from the corresponding graph. 

\begin{figure}[ht]
\centering
  
  \begin{center}

		\begin{tikzpicture}[scale = 0.6]
			
			
			
			

                \node at (0,0.5) {$v_{11}$};
                \node at (2,0.5) {$v_{12}$};
                \node at (0,-1.5) {$v_{21}$};
                \node at (2.5,-1.65) {$v_{22}$};

			\tikzstyle{every node}=[circle, draw, fill=black!50,inner sep=0pt, minimum width=4pt]
																							
			\node[circle,fill=black,draw] at (0,0) (v11) {};
                \node[circle,fill=black,draw] at (2,0) (v12) {};
                \node[circle,fill=black,draw] at (2,-2) (v22) {};
                \node[circle,fill=black,draw] at (0,-2) (v21) {};



			\path[every node]
			
			(v11) edge (v12)
                (v12) edge (v22)
                (v22) edge (v21)

			;
		\end{tikzpicture}
																
	\end{center}
								
	\caption{The conventional sign.}
	\label{fig:ConventionalSign}
\end{figure}
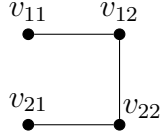

Now, if $C_j=t_{j_1}\vee t_{j_2}\vee t_{j_3}$  ($1\leq j_1 < j_2<j_3\leq m$) is a clause containing literals $t_{j_1}$, $t_{j_2}$ and $t_{j_3}$ of variables $x_{j_1}$, $x_{j_2}$ $x_{j_3}$, respectively, then the graph $G(C_j)$ corresponding to $C_j$ is the one from Figure \ref{fig:GraphCorrespondingClause}.

\begin{figure}[ht]
\centering
  
  \begin{center}

		\begin{tikzpicture}[scale = 0.6]
			
			
			
			


                \draw[->, thick] (-1, 0) -- (17,0);
                \draw[->, thick] (0, -1) -- (0, 10);

                \node at (2,-1) {$4j_1-1$};
                \node at (4,-1) {$4j_1$};

                \node at (7,-1) {$4j_2-1$};
                \node at (9,-1) {$4j_2$};

                 \node at (12,-1) {$4j_3-1$};
                \node at (14,-1) {$4j_3$};

                \node at (-1.5,7) {$4j-1$};
                \node at (-1,9) {$4j$};
                 \node at (-1.5,5) {$4j-2$};
                \node at (-1.5,3) {$4j-3$};


                \draw [dashed] (2,7) -- (2,0);
                \draw [dashed] (4,7) -- (4,0);
                \draw [dashed] (7,7) -- (7,0);
                \draw [dashed] (9,7) -- (9,0);
                \draw [dashed] (12,7) -- (12,0);
                \draw [dashed] (14,7) -- (14,0);

                \draw [dashed] (2,7) -- (0,7);
                \draw [dashed] (2,9) -- (0,9);

			\tikzstyle{every node}=[circle, draw, fill=black!50,inner sep=0pt, minimum width=4pt]
																							
			\node[circle,fill=black,draw] at (2,7) (v27) {};
                \node[circle,fill=black,draw] at (4,7) (v47) {};
                \node[circle,fill=black,draw] at (2,9) (v29) {};
                \node[circle,fill=black,draw] at (4,9) (v49) {};

                \node[circle,fill=black,draw] at (7,7) (v77) {};
                \node[circle,fill=black,draw] at (7,9) (v79) {};
                \node[circle,fill=black,draw] at (9,7) (v97) {};
                \node[circle,fill=black,draw] at (9,9) (v99) {};

                \node[circle,fill=black,draw] at (12,7) (v127) {};
                \node[circle,fill=black,draw] at (12,9) (v129) {};
                \node[circle,fill=black,draw] at (14,7) (v147) {};
                \node[circle,fill=black,draw] at (14,9) (v149) {};

                \node[circle,fill=black,draw] at (0,7) (v07) {};
                \node[circle,fill=black,draw] at (0,9) (v09) {};
                 \node[circle,fill=black,draw] at (0,5) (v05) {};
                \node[circle,fill=black,draw] at (0,3) (v03) {};



			\path[every node]
			
			(v27) edge (v47)
                (v47) edge (v49)
                (v49) edge (v29)

                (v77) edge (v97)
                (v97) edge (v99)
                (v99) edge (v79)

                (v127) edge (v147)
                (v147) edge (v149)
                (v149) edge (v129)

                (v07) edge (v09)

                (v09) edge [bend right] (v07)
                (v07) edge (v49)
                (v07) edge (v99)
                (v07) edge (v149)

                (v03) edge [bend left] (v05)
                (v03) edge (v49)
                (v03) edge (v99)
                (v03) edge (v149)

			;
		\end{tikzpicture}
																
	\end{center}
								
	\caption{The graph $G(C_j)$ corresponding to the clause $C_j$.}
	\label{fig:GraphCorrespondingClause}
\end{figure}

Now, for $i=1,...,n$ let $C_{j_1}, ..., C_{{j_{r(i)}}}$, where $r(i)\geq 1$, be the clauses of $C$ containing a literal of $x_i$. We can assume that $j_1<j_2<...<j_{r(i)}$. Define a graph $G(I)$ corresponding to $I$ as follows: if $G(C_1)$, ..., $G(C_m)$ are the graphs corresponding to clauses $C_1,..., C_m$, then for $i=1,...,n$ cyclically connect $G(C_{j_1}), ..., G(C_{j_{{r(i)}}})$ Figure \ref{fig:GraphCorrespondingVariable}.

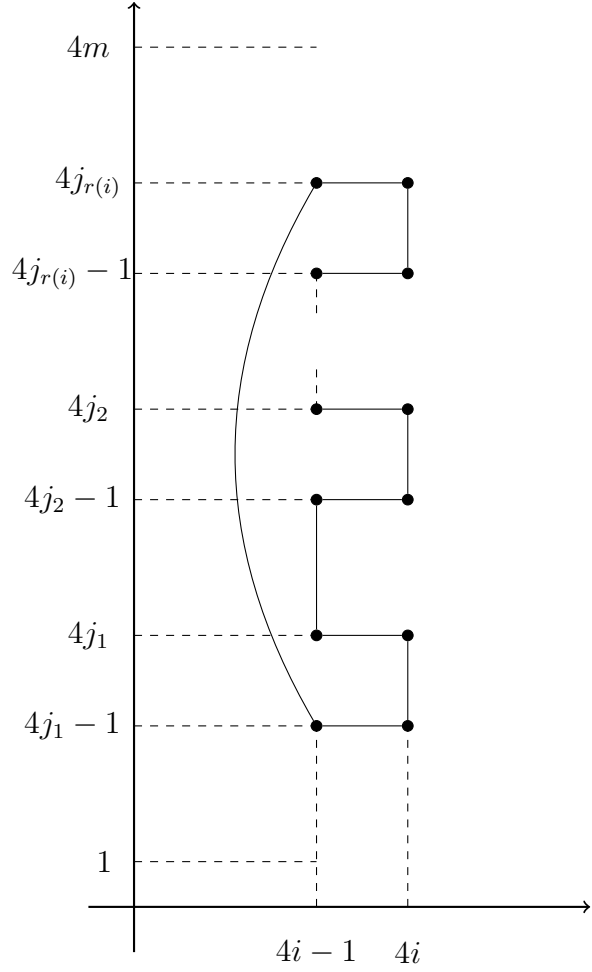
\begin{figure}[ht]
\centering
  
  \begin{center}

		\begin{tikzpicture}[scale = 0.6]
			
			
			
			


                \draw[->, thick] (-1, 0) -- (10,0);
                \draw[->, thick] (0, -1) -- (0,20);

                \node at (4,-1) {$4i-1$};
                \node at (6,-1) {$4i$};

                \node at (-0.65,1) {$1$};

                \node at (-1.35,4) {$4j_1-1$};
                \node at (-1,6) {$4j_1$};

                \node at (-1.35,9) {$4j_2-1$};
                \node at (-1,11) {$4j_2$};

                \node at (-1.35,14) {$4j_{r(i)}-1$};
                \node at (-1,16) {$4j_{r(i)}$};

                \node at (-1,19) {$4m$};


                \draw [dashed] (4,11) -- (4,12);
                \draw [dashed] (4,14) -- (4,13);

                \draw [dashed] (4,0) -- (4,4);
                \draw [dashed] (6,0) -- (6,4);

                \draw [dashed] (0,1) -- (4,1);
                \draw [dashed] (0,4) -- (4,4);
                \draw [dashed] (0,6) -- (4,6);
                \draw [dashed] (0,9) -- (4,9);
                \draw [dashed] (0,11) -- (4,11);
                \draw [dashed] (0,14) -- (4,14);
                \draw [dashed] (0,16) -- (4,16);
                \draw [dashed] (0,19) -- (4,19);

			\tikzstyle{every node}=[circle, draw, fill=black!50,inner sep=0pt, minimum width=4pt]
																							
			\node[circle,fill=black,draw] at (4,4) (v44) {};
                \node[circle,fill=black,draw] at (6,4) (v64) {};
                \node[circle,fill=black,draw] at (6,6) (v66) {};
                \node[circle,fill=black,draw] at (4,6) (v46) {};

                \node[circle,fill=black,draw] at (4,9) (v49) {};
                \node[circle,fill=black,draw] at (6,9) (v69) {};
                \node[circle,fill=black,draw] at (6,11) (v611) {};
                \node[circle,fill=black,draw] at (4,11) (v411) {};

                 \node[circle,fill=black,draw] at (4,14) (v414) {};
                \node[circle,fill=black,draw] at (6,14) (v614) {};
                \node[circle,fill=black,draw] at (6,16) (v616) {};
                \node[circle,fill=black,draw] at (4,16) (v416) {};



			\path[every node]
			
			(v44) edge (v64)
                (v64) edge (v66)
                (v66) edge (v46)

                (v46) edge (v49)

                (v49) edge (v69)
                (v69) edge (v611)
                (v611) edge (v411)

                (v414) edge (v614)
                (v614) edge (v616)
                (v616) edge (v416)

                (v44) edge [bend left] (v416)

			;
		\end{tikzpicture}
																
	\end{center}
								
	\caption{Cyclically joining the subgraphs that correspond to the variable $x_i$.}
	\label{fig:GraphCorrespondingVariable}
\end{figure}

\begin{figure}[ht]
\centering
  
  \begin{center}

		\begin{tikzpicture}[scale = 0.6]
			
			
			
			


               \node at (8,4) {Connecting to one};
               \node at (8,3) {of the three vertices};
               \node at (8,2) {$u_{11}$ of $G(C_1)$};

               \node at (8,9) {Connecting to one};
               \node at (8,8) {of the three vertices};
               \node at (8,7) {$u_{11}$ of $G(C_m)$};

                \draw[->, thick] (-2, 0) -- (10,0);
                \draw[->, thick] (0, -1) -- (0, 10);

                \node at (-1.35,-1) {$-1$};

                \node at (1,1) {$1$};
                \node at (1,2) {$2$};
                \node at (1,3) {$3$};
                \node at (1,4) {$4$};
                \node at (1.35,8) {$4m-1$};
                \node at (1,9) {$4m$};

                \draw [dashed] (-1,1) -- (0.5,1);
                \draw [dashed] (-1,2) -- (0.5,2);
                \draw [dashed] (-1,3) -- (0.5,3);
                \draw [dashed] (-1,4) -- (0.5,4);

                 \draw [dashed] (-1,8) -- (0.5,8);
                  \draw [dashed] (-1,9) -- (0.5,9);

                   \draw [dashed] (-1,1) -- (-1,0);

                    \draw [dashed] (-1,4) -- (-1,5);
                    \draw [dashed] (-1,8) -- (-1,7);



			\tikzstyle{every node}=[circle, draw, fill=black!50,inner sep=0pt, minimum width=4pt]
																							
			\node[circle,fill=black,draw] at (-1,1) (vm11) {};
                \node[circle,fill=black,draw] at (-1,2) (vm12) {};
                \node[circle,fill=black,draw] at (-1,3) (vm13) {};
                \node[circle,fill=black,draw] at (-1,4) (vm14) {};

                \node[circle,fill=black,draw] at (-1,8) (vm14minus1) {};
                \node[circle,fill=black,draw] at (-1,9) (vm14m) {};



			\path[every node]
			
			(vm11) edge (vm12)
                (vm12) edge (vm13)
                (vm13) edge (vm14)

                (vm14minus1) edge (vm14m)

                (vm14) edge [bend left] (5,4)
                (vm14m) edge [bend left] (5,9)

			;
		\end{tikzpicture}
																
	\end{center}
								
	\caption{Making sure that the resulting graph is connected.}
	\label{fig:GraphConnected}
\end{figure}
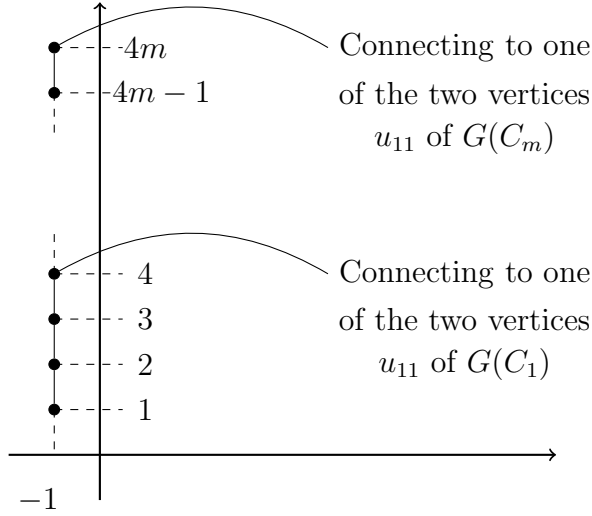

As it is stated in \cite{KM2008}, the constructed graph $G(I)$ may not be connected. Therefore in order to obtain a connected one let us consider a graph $G_I$ constructed from $G(I)$ as it is stated in Figure \ref{fig:GraphConnected}. Note that $G_I$ is a connected graph of maximum degree four with $|V(G_I)|=32m$, $|E(G_I)|=37m-1$, and $\nu(G_I)=\frac{|V(G_I)|}{2}=16m$. 

Let us show that $G_I$ is a bipartite graph. Define: $V_0$ to be the set of vertices $(x,y)$ of $G_I$ such that $x+y$ is an even number, and let $V_1$ to be the set of vertices $(x,y)$ of $G_I$ such that $x+y$ is an odd number. Note that $V_0$, $V_1$ form a partition of $V(G_I)$. Moreover, since every edge of $G_I$ joins a vertex from $V_0$ to one from $V_1$, we have that they form a bipartition. Thus, $G_I$ is a bipartite graph.

Consider an instance $(G_I,k)$ of Problem \ref{prob:main}, where $k=11m-1$. Note that $k\leq \frac{|V(G_I)|}{2}$. Moreover, $(G_I,k)$ can be constructed from $I=(X,C)$ in polynomial time. In order to demonstrate the NP-hardness of Problem \ref{prob:main}, it suffices to show that:
\begin{enumerate}
    \item [(A)] Suppose that all clauses of $C$ are satisfiable. Then there is a maximum matching $F$ of $G_I$, such that \[|\nu(G_I\backslash F)-k|\leq f(|V|).\]

    \item [(B)] Suppose that at most $\left(\frac{7}{8}+\varepsilon\right)\cdot m$ clauses of $C$ are satisfiable by any truth assignment. Then for any maximum matching $F$ of $G_I$, we have
    \[|\nu(G_I\backslash F)-k|> f(|V|).\]
\end{enumerate} Let us prove these statements one by one.

\medskip

(A) Let $\Tilde{\alpha}=(\alpha_1,...,\alpha_n)$ be a truth assignment satisfying all clauses of $C$. Consider a subset $F$ of edges of $G_I$ defined as follows:
\begin{itemize}
    \item add the unique perfect matching of the path $(-1,1)-(-1, 4m)$ in Figure \ref{fig:GraphConnected} to $F$,

    \item add all edges $(0, 4j-3) (0, 4j-2)$, $(0, 4j-1) (0, 4j)$ for $j = 1, ..., m$ to $F$,

    \item add all edges $u_{11}u_{12}$, $u_{21}u_{22}$ to $F$, for $i=1,...,n$,

    \item if $\alpha_i=TRUE$, then add the vertical edges of the cycle corresponding to the variable $x_i$ in Figure \ref{fig:GraphCorrespondingVariable}, and if $\alpha_i=FALSE$, then add the horizontal edges of the cycle corresponding to the variable $x_i$ in Figure \ref{fig:GraphCorrespondingVariable}.
\end{itemize} Note that $F$ is a matching in $G_I$ covering all its vertices. Hence, it is a perfect matching. Let us compute $\nu(G_I\backslash F)$. Note that on the path $(-1,1)-(-1, 4m)$ in Figure \ref{fig:GraphConnected}, we can take $2m-1$ edges. Moreover, since all clauses are satisfied, in each copy of $G(C_j)$ ($1\leq j\leq m$) we can take nine edges. Hence,
\[\nu(G_I\backslash F)=2m-1+9m=11m-1=k.\]
Thus, the inequality
\[|\nu(G_I\backslash F)-k|\leq f(|V|)\]
is trivially true.

\medskip

(B) Suppose that at most $\left(\frac{7}{8}+\varepsilon\right)\cdot m$ clauses of $C$ are satisfiable by any truth assignment. Let $F$ be any perfect matching of $G_I$. Note that since $F$ covers all vertices, we have that the edge $(-1,1)(-1,2)\in F$ (Figure \ref{fig:GraphConnected}). By a similar reasoning, we have that $F$ contains the unique perfect matching of the path $(-1,1)-(-1,4m)$ (Figure \ref{fig:GraphConnected}), for $j=1,...,m$ edges $(0,4j-3)(0,4j-2)$, $(0,4j-1)(0,4j)$ from Figure \ref{fig:GraphCorrespondingClause}. This implies that it must contain all edges $u_{11}u_{12}$, $u_{21}u_{22}$ (Figures \ref{fig:Gadgeta} and \ref{fig:Gadgetb}). This implies that for $i=1,..,n$ the remaining edges of $F$ induce a perfect matching in each of the subgraphs corresponding to the variable $x_i$ (Figure \ref{fig:GraphCorrespondingVariable}).

Define a truth assignment $\Tilde{\gamma}=(\gamma_1,...,\gamma_n)$ corresponding to $F$ as follows: if the vertical edges of the cycle corresponding to the variable $x_i$ (Figure \ref{fig:GraphCorrespondingVariable}) belong to $F$, then set $\gamma_i=TRUE$, and if the horizontal edges of the cycle corresponding to the variable $x_i$ (Figure \ref{fig:GraphCorrespondingVariable}) belong to $F$, then set $\gamma_i=FALSE$. Note that if $\Tilde{\gamma}$ satisfies the clause $C_j$ then in $G\backslash F$ we can take nine edges from it, and if $\Tilde{\gamma}$ does not satisfy the clause $C_j$ then in $G\backslash F$ we can take eight edges from it. Finally, we can always assume that in $G\backslash F$, $F$ takes the remaining edges of the path $(-1,1)-(-1,4m)$ (Figure \ref{fig:GraphConnected}). Thus,
\begin{equation}
    \label{eq:PerfectMatchingAssignment} \nu(G_I\backslash F)=2m-1+9\cdot |Sat(\Tilde{\gamma})|+8\cdot (m-|Sat(\Tilde{\gamma})|)=10m-1+|Sat(\Tilde{\gamma})|.
\end{equation} Here $Sat(\Tilde{\gamma})$ denotes the set of clauses of $C$ satisfied by $\Tilde{\gamma}$. Therefore,
\begin{align*}
    |\nu(G_I\backslash F)-k| &=\left|10m-1+|Sat(\Tilde{\gamma})|-11m+1\right|=m-|Sat(\Tilde{\gamma})|.
\end{align*} By our assumption
\[|Sat(\Tilde{\gamma})|\leq \left(\frac{7}{8}+\varepsilon\right)\cdot m.\]
Hence
\begin{equation*}
    |\nu(G_I\backslash F)-k| =m-|Sat(\Tilde{\gamma})|\geq m-\left(\frac{7}{8}+\varepsilon\right)\cdot m>c\cdot 32m=c\cdot |V(G_I)|,
\end{equation*} since
\[c\cdot 32<1-\frac{7}{8}-\varepsilon,\]
or
\begin{equation}
    \label{eq:final} c<\frac{1}{256}-\frac{\varepsilon}{32}.
\end{equation} Thus, if for a given $c\in \left(0, \frac{1}{256}\right)$ we choose $\varepsilon\in \left(0, \frac{1}{8}\right)$ so that inequality (\ref{eq:final}) holds, then we will get the result. The proof is complete.
\end{proof}

\begin{corollary}
    \label{cor:Sublinear} Problem \ref{prob:main} is NP-hard for every sublinear and polynomial time computable function $f$. A function $f: \mathbb{N} \rightarrow \mathbb{N}$ on positive integers is called sublinear, if 
\[\lim_{n\rightarrow +\infty}\frac{f(n)}{n}=0.\]
\end{corollary}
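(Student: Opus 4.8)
The plan is to amplify the reduction from Theorem \ref{thm:AdditiveApprox} by a constant factor, so that the (linear) completeness--soundness gap it produces dominates any sublinear $f$. First I would extract from the proof of Theorem \ref{thm:AdditiveApprox} the precise gap: for the graph built from a $3$-SAT instance with $m$ clauses, if all clauses are satisfiable then some maximum matching $F$ gives $\nu(G_I\backslash F)=k$, so $|\nu(G_I\backslash F)-k|=0$; and if at most $\left(\frac{7}{8}+\varepsilon\right)m$ clauses are satisfiable, then every maximum matching satisfies $|\nu(G_I\backslash F)-k|=m-|Sat(\tilde{\gamma})|\geq\left(\frac{1}{8}-\varepsilon\right)m$. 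Since $f:\mathbb{N}\to\mathbb{N}$ is nonnegative, the ``yes'' case already yields $|\nu(G_I\backslash F)-k|=0\leq f(|V|)$ regardless of instance size, so only the ``no'' case needs attention.

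Next I would fix $\varepsilon\in\left(0,\frac{1}{8}\right)$ and set $\rho=\frac{1/8-\varepsilon}{32}>0$. Because $f$ is sublinear, $f(n)/n\to 0$, so there is a constant $N_0$, depending only on $f$ and $\varepsilon$ and not on the instance, with $f(n)<\rho\,n$ for every $n\geq N_0$. To push every instance past this threshold I would pad the formula: given a gap-$3$-SAT instance $I$ with $m$ clauses, let $I^{(t)}$ be the disjoint union of $t$ copies of $I$ on pairwise disjoint variable sets, where $t=\lceil N_0/32\rceil$ is a fixed constant. The key observation is that disjoint copies preserve both horns of the promise of Lemma \ref{lem:3SATinapprox}: if $I$ is fully satisfiable so is $I^{(t)}$, and if no assignment satisfies more than $\left(\frac{7}{8}+\varepsilon\right)m$ clauses of $I$, then no assignment satisfies more than $t\left(\frac{7}{8}+\varepsilon\right)m=\left(\frac{7}{8}+\varepsilon\right)m'$ clauses of $I^{(t)}$, where $m'=tm$ is its number of clauses.

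I would then apply the construction of Theorem \ref{thm:AdditiveApprox} verbatim to $I^{(t)}$, obtaining a connected bipartite graph on $|V|=32m'$ vertices together with $k=11m'-1\leq |V|/2$. By the gap recalled above (with $m'$ in place of $m$), the ``yes'' case gives $|\nu-k|=0\leq f(|V|)$, while the ``no'' case gives $|\nu-k|\geq\left(\frac{1}{8}-\varepsilon\right)m'$. It remains to check that this lower bound strictly exceeds $f(|V|)$: since $m\geq 1$ we have $|V|=32tm\geq 32t\geq N_0$, hence $f(|V|)<\rho\,|V|=\rho\cdot 32m'=\left(\frac{1}{8}-\varepsilon\right)m'\leq|\nu-k|$, so ``no'' instances map to ``no'' instances of Problem \ref{prob:main}. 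As $t$ is a constant, $I^{(t)}$ and its graph are computable from $I$ in polynomial time, so the reduction is legitimate and yields NP-hardness (in fact already within connected bipartite graphs).

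The step I expect to be the main obstacle is making the amplification both uniform and polynomial. The base reduction produces a gap of only $\Theta(m)$, so for a single fixed instance a sublinear $f$ with a large hidden threshold could swallow it; the repair must be a blow-up by a factor $t$ that depends only on $f$ and $\varepsilon$ (to keep the reduction polynomial) yet is guaranteed to push $|V|$ beyond $N_0$ for \emph{every} instance, which is why I reduce from the copy $I^{(t)}$ rather than from $I$ directly. The remaining checks---that disjoint copies exactly preserve both cases of the promise and that $k=11m'-1$ still satisfies $k\leq|V|/2$ after padding---are routine but essential.
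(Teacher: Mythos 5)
Your proposal is correct and follows essentially the route the paper intends for this corollary: reuse the reduction of Theorem \ref{thm:AdditiveApprox}, whose completeness case gives $|\nu(G_I\backslash F)-k|=0$ and whose soundness case gives a gap of $\left(\frac{1}{8}-\varepsilon\right)m$, which is linear in $|V|=32m$, and observe that any sublinear $f$ must eventually fall strictly below that linear gap. Your padding with a constant number $t$ of disjoint copies is a clean, self-contained way to make the threshold $N_0$ uniform over all instances; the paper leaves this point implicit (one could equally dispatch the finitely many clause-counts $m<N_0/32$ by brute force, since the gap problem of Lemma \ref{lem:3SATinapprox} remains NP-hard when restricted to instances with $m$ above any fixed constant), so your version is, if anything, the more careful write-up of the same argument.
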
 Examples of such functions are $f(x)=C$, where $C$ is a constant, $f(x)=\log x$, $f(x)=\sqrt{x}$, etc..

\medskip

Now we turn to the multiplicative approximation of our parameters. As we said before, \cite{KM2008Artur} shows that in any graph $G$, $L(G)\leq 2\cdot \ell(G)$. This means that any algorithm (for example, those presented in \cite{Edmonds1,Edmonds2,EvenKariv,Lovasz,MicaliVazirani}) for finding a maximum matching in a given graph $G$ is a 2-approximation algorithm for $\ell(G)$, and a $\frac{1}{2}$-approximation algorithm for $L(G)$. Recall that by the standards of the area of approximation algorithms \cite{Vazirani}, a polynomial time algorithm constructing some maximum matching $F$ in a given graph $G$ is a $(1+\varepsilon)$-approximation algorithm for $\ell(G)$, if for every input graph $G$
\[\nu(G\backslash F)\leq (1+\varepsilon)\cdot \ell(G).\]
Similarly, it is a $(1-\varepsilon)$-approximation algorithm for $L(G)$, if for every input graph $G$
\[\nu(G\backslash F)\geq (1-\varepsilon)\cdot L(G).\]
We are ready to present our next result.

\begin{theorem}
    \label{thm:L(G)inapprox} If $P\neq NP$, then for all $\varepsilon\in \left(0, \frac{1}{88}\right)$ there is no polynomial time $\left(1-\varepsilon\right)$-approximation algorithm for $L(G)$ in the class of connected bipartite graphs $G$.
\end{theorem}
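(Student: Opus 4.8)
The plan is to recycle the reduction from Lemma~\ref{lem:3SATinapprox} built in the proof of Theorem~\ref{thm:AdditiveApprox}, and to read off an inapproximability gap for $L(G_I)$ from the same clause-gadget accounting. First I would note that, since $\nu(G_I)=\frac{|V(G_I)|}{2}=16m$, every maximum matching of $G_I$ is a perfect matching. The structural analysis carried out in part~(B) of that proof shows that every perfect matching $F$ of $G_I$ is forced to contain the relevant path edges and the edges $u_{11}u_{12}$, $u_{21}u_{22}$, and hence induces a perfect matching on each variable cycle; this perfect matching is described by a truth assignment $\Tilde{\gamma}$, and conversely every assignment arises this way. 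Consequently the value $\nu(G_I\backslash F)$ attached to $F$ is exactly the quantity from~(\ref{eq:PerfectMatchingAssignment}), namely $10m-1+|Sat(\Tilde{\gamma})|$. Maximising over all perfect matchings therefore yields
\[
L(G_I)=10m-1+\max_{\Tilde{\gamma}}|Sat(\Tilde{\gamma})|,
\]
where the maximum ranges over all truth assignments.

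With this identity in hand, the two cases of Lemma~\ref{lem:3SATinapprox} translate directly. If all $m$ clauses are simultaneously satisfiable, then $\max_{\Tilde{\gamma}}|Sat(\Tilde{\gamma})|=m$ and $L(G_I)=11m-1$. If at most $\left(\frac{7}{8}+\varepsilon_0\right)m$ clauses can be satisfied (I rename the constant of the lemma to $\varepsilon_0$ to free up $\varepsilon$ for the approximation ratio), then $L(G_I)\le 10m-1+\left(\frac{7}{8}+\varepsilon_0\right)m=\left(\frac{87}{8}+\varepsilon_0\right)m-1$. Now I would assume, for contradiction, that a polynomial time $(1-\varepsilon)$-approximation algorithm for $L$ exists, run it on $G_I$ to obtain a maximum matching $F$, and compute $\nu(G_I\backslash F)$ in polynomial time. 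In the ``yes'' case the guarantee gives $\nu(G_I\backslash F)\ge (1-\varepsilon)(11m-1)$, whereas in the ``no'' case $\nu(G_I\backslash F)\le L(G_I)\le\left(\frac{87}{8}+\varepsilon_0\right)m-1$. Thresholding the computed value at $\left(\frac{87}{8}+\varepsilon_0\right)m-1$ then decides the instance correctly provided
\[
(1-\varepsilon)(11m-1)>\left(\tfrac{87}{8}+\varepsilon_0\right)m-1,
\]
which, after dropping the lower-order terms that only help the inequality, reduces to $11-11\varepsilon>\frac{87}{8}+\varepsilon_0$, i.e. $\varepsilon<\frac{1}{88}-\frac{\varepsilon_0}{11}$. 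Since any $\varepsilon\in\left(0,\frac{1}{88}\right)$ leaves $\frac{1}{8}-11\varepsilon>0$, I can pick $\varepsilon_0\in\left(0,\frac{1}{8}\right)$ small enough that the inequality holds, and the resulting polynomial time decision procedure contradicts the NP-hardness of Lemma~\ref{lem:3SATinapprox}.

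The step I expect to demand the most care is the upper-bound direction of the identity for $L(G_I)$: establishing that no perfect matching can leave more than nine edges available per clause gadget after deletion, so that~(\ref{eq:PerfectMatchingAssignment}) is tight as a maximum rather than merely achievable. This is essentially contained in the forcing argument of part~(B) of Theorem~\ref{thm:AdditiveApprox}, but I would want to state explicitly that the number of surviving edges in each copy of $G(C_j)$ depends only on whether $\Tilde{\gamma}$ satisfies $C_j$, and is at most nine in every case, so that the clause-by-clause count genuinely computes the maximum. The handling of the single residual path edge and of the additive slack in the threshold inequality is routine, and the only arithmetic worth checking is that the constant $\frac{1}{88}$ emerges precisely from the gap $11-\frac{87}{8}=\frac{1}{8}$ divided by the coefficient $11$ of $\varepsilon$.
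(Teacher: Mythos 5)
Your proposal is correct and follows essentially the same route as the paper: the same reduction and gadget accounting from Theorem~\ref{thm:AdditiveApprox} together with equation~(\ref{eq:PerfectMatchingAssignment}), and the same threshold-comparison argument, with only cosmetic differences (you state the exact identity $L(G_I)=10m-1+\max_{\Tilde{\gamma}}|Sat(\Tilde{\gamma})|$ where the paper uses only the two one-sided bounds, and you choose the gap constant $\varepsilon_0$ ``small enough'' at the end, whereas the paper fixes $\delta=11(1-\varepsilon)-10-\frac{7}{8}$ upfront, which is exactly your boundary value $\frac{1}{8}-11\varepsilon$). Your arithmetic yielding the constant $\frac{1}{88}$ matches the paper's.
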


\begin{proof} Suppose that $\mathcal{A}$ is a polynomial time $\left(1-\varepsilon\right)$-approximation algorithm for $L(G)$ in the class of all graphs $G$. Let $F$ be the maximum matching that it returns. This means that 
\[\nu(G\backslash F)\geq (1-\varepsilon)\cdot L(G)\]
for every input graph $G$. Define:
\[\delta=11(1-\varepsilon)-10-\frac{7}{8}\]
Note that for all $\varepsilon\in \left(0, \frac{1}{88}\right)$ we have $\delta\in  \left(0, \frac{1}{8}\right)$ and
\begin{equation}
    \label{eq:epsilondelta} 10+ \frac{7}{8}+\delta=11(1-\varepsilon).
\end{equation}

For every instance $I=(X, C)$ of Lemma \ref{lem:3SATinapprox} consider the graph $G_I$ constructed in Theorem \ref{thm:AdditiveApprox} with respect to $\delta$ defined above. We consider two cases.

\medskip

Case (I): There is a truth assignment satisfying all clauses $C$. In this case we have $L(G_I)\geq 11m-1$ as we saw in the proof of case (A) of Theorem \ref{thm:AdditiveApprox}. Hence, for the maximum matching $F$ returned by $\mathcal{A}$ we will have
\[\nu(G_I\backslash F)\geq (1-\varepsilon)\cdot L(G_I)\geq (1-\varepsilon)\cdot (11m-1). \]

\medskip

Case (II): Suppose that our instance $I$ is such that at most $\left(\frac{7}{8}+\delta\right)\cdot m$ clauses of $C$ are satisfiable by any truth assignment. Then, as we saw in equation (\ref{eq:PerfectMatchingAssignment}) from the proof of case (B) of Theorem \ref{thm:AdditiveApprox}, 
\[L(G_I)\leq 10m-1+ \left(\frac{7}{8}+\delta\right)\cdot m. \]
Hence, for the maximum matching $F$ returned by $\mathcal{A}$, we will have
\[\nu(G_I\backslash F)\leq L(G_I)\leq 10m-1+ \left(\frac{7}{8}+\delta\right)\cdot m.\]
Thus, using equality (\ref{eq:epsilondelta}), we get:
\begin{align*}
   \nu(G_I\backslash F) &\leq  10m-1+ \left(\frac{7}{8}+\delta\right)\cdot m=\left(10+\frac{7}{8}+\delta\right)\cdot m-1\\
                        &=11(1-\varepsilon)\cdot m-1< 11(1-\varepsilon)\cdot m-1+\varepsilon=(1-\varepsilon)\cdot (11m-1).
\end{align*}
 Thus, using our algorithm $\mathcal{A}$ we can solve NP-hard instances from Lemma \ref{lem:3SATinapprox} by simply comparing $\nu(G_I\backslash F)$ with $(1-\varepsilon)\cdot (11m-1)$. Hence $P=NP$. The proof is complete.
\end{proof}

\begin{theorem}
    \label{thm:l(G)inapprox} If $P\neq NP$, then for all $\varepsilon\in \left(0, \frac{1}{80}\right)$ there is no polynomial time $\left(1+\varepsilon\right)$-approximation algorithm for $\ell(G)$ in the class of connected bipartite graphs $G$.
\end{theorem}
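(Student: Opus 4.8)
The plan is to mirror the reduction of Theorem~\ref{thm:L(G)inapprox}, but to engineer the clause gadget so that $\ell$ tracks the \emph{maximum} number of simultaneously satisfiable clauses rather than $L$. Recall from equation~(\ref{eq:PerfectMatchingAssignment}) that in the graph of Theorem~\ref{thm:AdditiveApprox} a satisfied clause contributes $9$ edges and an unsatisfied clause contributes $8$ edges to $G_I\backslash F$; consequently $\ell(G_I)=10m-1+\min_{\gamma}|Sat(\gamma)|$, a quantity that the gap of Lemma~\ref{lem:3SATinapprox} does not control. First I would therefore replace the clause gadget of Figure~\ref{fig:GraphCorrespondingClause} by a ``complementary'' gadget $H(C_j)$ in which the extra (ninth) edge of $H_I\backslash F$ becomes available precisely when the clause is \emph{falsified}, so that a satisfied clause now contributes $8$ edges and an unsatisfied one contributes $9$. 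Writing $H_I$ for the resulting connected bipartite graph (assembled exactly as in Figures~\ref{fig:GraphCorrespondingVariable} and~\ref{fig:GraphConnected}), every maximum matching is again a perfect matching, is forced on the path and on the $u$-edges as in case~(B) of Theorem~\ref{thm:AdditiveApprox}, and hence corresponds to a truth assignment $\tilde\gamma$ with
\[
\nu(H_I\backslash F)=2m-1+8\,|Sat(\tilde\gamma)|+9\,(m-|Sat(\tilde\gamma)|)=11m-1-|Sat(\tilde\gamma)|.
\]
Minimising over $F$ (equivalently over all assignments, since every assignment is realised by some perfect matching) then gives $\ell(H_I)=11m-1-\max_{\gamma}|Sat(\gamma)|$.

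With this identity the argument becomes parallel to Theorem~\ref{thm:L(G)inapprox}. Assume for contradiction that $\mathcal{A}$ is a polynomial time $(1+\varepsilon)$-approximation algorithm for $\ell$ with $\varepsilon\in\left(0,\frac{1}{80}\right)$, returning a maximum matching $F$ with $\nu(H_I\backslash F)\le (1+\varepsilon)\,\ell(H_I)$. I would set
\[
\delta=\frac{1}{8}-10\varepsilon,
\]
so that $\delta\in\left(0,\frac{1}{8}\right)$ and $11-\frac{7}{8}-\delta=10(1+\varepsilon)$, and apply Lemma~\ref{lem:3SATinapprox} with this $\delta$. In the satisfiable case $\max_\gamma|Sat(\gamma)|=m$, so $\ell(H_I)\le 10m-1$ and $\mathcal{A}$ returns $\nu(H_I\backslash F)\le (1+\varepsilon)(10m-1)=10(1+\varepsilon)m-1-\varepsilon$. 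In the case where at most $\left(\frac{7}{8}+\delta\right)m$ clauses are simultaneously satisfiable, $\max_\gamma|Sat(\gamma)|\le\left(\frac{7}{8}+\delta\right)m$, whence $\ell(H_I)\ge 11m-1-\left(\frac{7}{8}+\delta\right)m=10(1+\varepsilon)m-1$ and therefore $\nu(H_I\backslash F)\ge \ell(H_I)\ge 10(1+\varepsilon)m-1$.

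The two outputs are then separated by the threshold $10(1+\varepsilon)m-1$: in the satisfiable case $\nu(H_I\backslash F)<10(1+\varepsilon)m-1$, while in the other case $\nu(H_I\backslash F)\ge 10(1+\varepsilon)m-1$. Comparing $\nu(H_I\backslash F)$ with this threshold decides in polynomial time which alternative of Lemma~\ref{lem:3SATinapprox} holds, forcing $P=NP$ and completing the proof. I expect the main obstacle to be the construction and verification of the complementary clause gadget $H(C_j)$: one must check that $H_I$ still has a perfect matching, that every maximum matching is perfect and forced exactly as before (so that the correspondence $F\mapsto\tilde\gamma$ and the edge counts $8$ and $9$ are valid for \emph{all} maximum matchings), and that $H_I$ remains connected and bipartite. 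Everything after that is the routine arithmetic reproduced above, with the constant $\frac{1}{80}=\frac{1}{8\cdot 10}$ arising from the value $10m-1$ of $\ell(H_I)$ in the satisfiable case, exactly as $\frac{1}{88}=\frac{1}{8\cdot 11}$ arose from $11m-1$ in Theorem~\ref{thm:L(G)inapprox}.
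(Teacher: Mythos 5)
Your plan coincides with the paper's own proof in every step that you actually carry out: the paper likewise keeps the variable cycles, the connecting path, and the whole arithmetic template of Theorem~\ref{thm:L(G)inapprox}, and changes only the clause gadget so that a satisfied clause contributes $8$ edges and a falsified one contributes $9$ to $\nu(G_I\backslash F)$, giving equation~(\ref{eq:PerfectMatchingTruthAssignment}), $\nu(G_I\backslash F)=11m-1-|Sat(\tilde{\gamma})|$, which is your identity verbatim. Your choice $\delta=\frac{1}{8}-10\varepsilon$ is literally the paper's $\delta=11-\frac{7}{8}-10(1+\varepsilon)$, and your threshold $10(1+\varepsilon)m-1$ is interchangeable with the paper's $(1+\varepsilon)(10m-1)$, the two differing by $\varepsilon$; the case analyses are then identical. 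Your opening observation --- that reusing the gadget of Theorem~\ref{thm:AdditiveApprox} would tie $\ell(G_I)$ to $\min_\gamma|Sat(\gamma)|$, which H{\aa}stad's gap does not control --- is also the correct diagnosis of why a new gadget is needed.

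The genuine gap is that the one nonroutine ingredient, the complementary clause gadget $H(C_j)$, is postulated rather than constructed: you list the properties it must have (perfect matchings exist, every maximum matching is perfect and forced on the path and the $u$-edges, the correspondence $F\mapsto\tilde{\gamma}$ is onto all assignments, the counts are $8$/$9$, and the graph stays connected and bipartite), but exhibit no graph satisfying them, so as written the argument reduces the theorem to an unverified combinatorial claim. The paper closes exactly this hole with the gadget of Figure~\ref{fig:GraphCorrespondingClausePrime}: the four per-clause vertices on the $y$-axis are deleted altogether, and instead the three literal gadgets of $C_j$ are chained directly by two edges, joining $v_{12}$ of the first gadget to $v_{11}$ of the second and $v_{12}$ of the second to $v_{11}$ of the third. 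The resulting graph $G_I$ is still connected, still bipartite under the same parity bipartition, has maximum degree three and $|V(G_I)|=28m$, and in it a satisfied clause yields $8$ edges and a falsified clause $9$, which is precisely the behavior you required. So your proposal is the paper's proof with its crux left as an assumption; to make it a complete argument you would need to produce such a gadget and verify the forcing and the $8$/$9$ counts, which is where essentially all of the work in the paper's proof lies.
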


\begin{proof} Suppose that $\mathcal{B}$ is a polynomial time $\left(1+\varepsilon\right)$-approximation algorithm for $\ell(G)$ in the class of all graphs $G$. Let $F_0$ be the maximum matching that it returns. This means that 
\[\nu(G\backslash F_0)\leq (1+\varepsilon)\cdot \ell(G)\]
for every input graph $G$.  Define:
\[\delta=11-\frac{7}{8}-10(1+\varepsilon).\]
Note that for all $\varepsilon\in \left(0, \frac{1}{80}\right)$, we have $\delta\in  \left(0, \frac{1}{8}\right)$ and
\begin{equation}
    \label{eq:epsilondeltaPrime} 11- \frac{7}{8}-\delta=10(1+\varepsilon).
\end{equation}

For every instance $I=(X, C)$ of the problem from Lemma \ref{lem:3SATinapprox} with $\delta\in  \left(0, \frac{1}{8}\right)$, we are going to construct a graph $G_I$ like we did in Theorem \ref{thm:AdditiveApprox}. Let $I=(X, C)$ be an instance of the first problem. As before, our graph is going to have vertices which will be integral points on the plane. In other words, our vertices will be pairs $(x,y)$ where both $x$ and $y$ are integers. 

Suppose $x_i$ ($1\leq i\leq n$) is a boolean variable from $X$ appearing in a clause $C_j$ ($1\leq j\leq m$) from $C$. If $x_i$ appears as a variable in $C_j$, then the graph corresponding to it is from Figure \ref{fig:Gadgeta}, and if $x_i$ appears as a negated variable in $C_j$, then the graph corresponding to it is from Figure \ref{fig:Gadgetb}. This graph is going to appear as a part of a larger graph. We will prefer not to draw the vertices $u_{ij}$ ($1\leq i,j \leq 2$). Thus, we will use the same conventional sign for them Figure \ref{fig:ConventionalSign}. The vertices $v_{ij}$ ($1\leq i, j\leq 2$) are the ones from the corresponding graph. 

Now, if $C_j=t_{j_1}\vee t_{j_2}\vee t_{j_3}$  ($1\leq j_1 < j_2<j_3\leq m$) is a clause containing literals $t_{j_1}$, $t_{j_2}$ and $t_{j_3}$ of variables $x_{j_1}$, $x_{j_2}$ $x_{j_3}$, respectively, then the graph $G(C_j)$ corresponding to $C_j$ is the one from Figure \ref{fig:GraphCorrespondingClausePrime}.

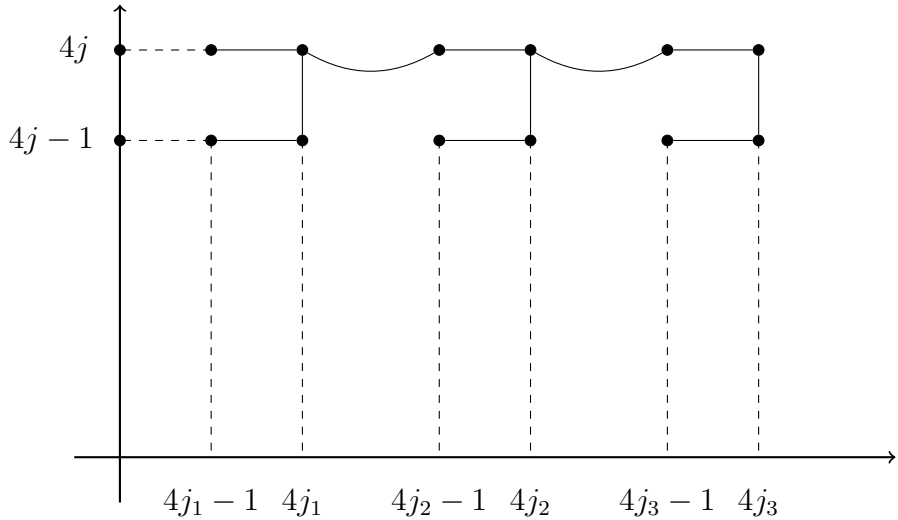
\begin{figure}[ht]
\centering
  
  \begin{center}

		\begin{tikzpicture}[scale = 0.6]
			
			
			
			


                \draw[->, thick] (-1, 0) -- (17,0);
                \draw[->, thick] (0, -1) -- (0, 10);

                \node at (2,-1) {$4j_1-1$};
                \node at (4,-1) {$4j_1$};

                \node at (7,-1) {$4j_2-1$};
                \node at (9,-1) {$4j_2$};

                 \node at (12,-1) {$4j_3-1$};
                \node at (14,-1) {$4j_3$};

                \node at (-1.5,7) {$4j-1$};
                \node at (-1,9) {$4j$};


                \draw [dashed] (2,7) -- (2,0);
                \draw [dashed] (4,7) -- (4,0);
                \draw [dashed] (7,7) -- (7,0);
                \draw [dashed] (9,7) -- (9,0);
                \draw [dashed] (12,7) -- (12,0);
                \draw [dashed] (14,7) -- (14,0);

                \draw [dashed] (2,7) -- (0,7);
                \draw [dashed] (2,9) -- (0,9);

			\tikzstyle{every node}=[circle, draw, fill=black!50,inner sep=0pt, minimum width=4pt]
																							
			\node[circle,fill=black,draw] at (2,7) (v27) {};
                \node[circle,fill=black,draw] at (4,7) (v47) {};
                \node[circle,fill=black,draw] at (2,9) (v29) {};
                \node[circle,fill=black,draw] at (4,9) (v49) {};

                \node[circle,fill=black,draw] at (7,7) (v77) {};
                \node[circle,fill=black,draw] at (7,9) (v79) {};
                \node[circle,fill=black,draw] at (9,7) (v97) {};
                \node[circle,fill=black,draw] at (9,9) (v99) {};

                \node[circle,fill=black,draw] at (12,7) (v127) {};
                \node[circle,fill=black,draw] at (12,9) (v129) {};
                \node[circle,fill=black,draw] at (14,7) (v147) {};
                \node[circle,fill=black,draw] at (14,9) (v149) {};




			\path[every node]
			
			(v27) edge (v47)
                (v47) edge (v49)
                (v49) edge (v29)

                (v77) edge (v97)
                (v97) edge (v99)
                (v99) edge (v79)

                (v127) edge (v147)
                (v147) edge (v149)
                (v149) edge (v129)

                (v07) edge (v09)



                (v49) edge [bend right] (v79)
                (v99) edge [bend right] (v129)

			;
		\end{tikzpicture}
																
	\end{center}
								
	\caption{The graph $G(C_j)$ corresponding to the clause $C_j$.}
	\label{fig:GraphCorrespondingClausePrime}
\end{figure}

Now, for $i=1,...,n$ let $C_{j_1}, ..., C_{{j_{r(i)}}}$, where $r(i)\geq 1$, be the clauses of $C$ containing a literal of $x_i$. We can assume that $j_1<j_2<...<j_{r(i)}$. Define a graph $G(I)$ corresponding to $I$ as follows: if $G(C_1)$, ..., $G(C_m)$ are the graphs corresponding to clauses $C_1,..., C_m$, then for $i=1,...,n$ cyclically connect $G(C_{j_1}), ..., G(C_{j_{{r(i)}}})$ Figure \ref{fig:GraphCorrespondingVariable}.

As before, the constructed graph $G(I)$ may not be connected. Therefore, in order to obtain a connected one, let us consider a graph $G_I$ constructed from $G(I)$ as it is stated in Figure \ref{fig:GraphConnected}. Note that $G_I$ is a connected graph of maximum degree three with $|V(G_I)|=28m$, $|E(G_I)|=31m-1$, and $\nu(G_I)=\frac{|V(G_I)|}{2}=14m$. 

Let us show that $G_I$ is a bipartite graph. Define: $V_0$ to be the set of vertices $(x,y)$ of $G_I$ such that $x+y$ is an even number, and let $V_1$ to be the set of vertices $(x,y)$ of $G_I$ such that $x+y$ is an odd number. Note that $V_0$, $V_1$ form a partition of $V(G_I)$. Moreover, since every edge of $G_I$ joins a vertex from $V_0$ to one from $V_1$, we have that they form a bipartition. Thus, $G_I$ is a bipartite graph.

We consider two cases.

\medskip

Case (I): There is a truth assignment satisfying all clauses $C$. Let $\Tilde{\alpha}=(\alpha_1,...,\alpha_n)$ be a truth assignment satisfying all clauses of $C$. Consider a subset $F$ of edges of $G_I$ defined as follows:
\begin{itemize}
    \item add the unique perfect matching of the path $(-1,1)-(-1, 4m)$ in Figure \ref{fig:GraphConnected} to $F$,

    \item add all edges $u_{11}u_{12}$, $u_{21}u_{22}$ to $F$, for $i=1,...,n$,

    \item if $\alpha_i=TRUE$, then add the horizontal edges of the cycle corresponding to the variable $x_i$ Figure \ref{fig:GraphCorrespondingVariable}, and if $\alpha_i=FALSE$, then add the vertical edges of the cycle corresponding to the variable $x_i$ Figure \ref{fig:GraphCorrespondingVariable}.
\end{itemize} Note that $F$ is a matching in $G_I$ covering all its vertices. Hence, it is a perfect matching. Let us compute $\nu(G_I\backslash F)$. Note that on the path $(-1,1)-(-1, 4m)$ in Figure \ref{fig:GraphConnected}, we can take $2m-1$ edges. Moreover, since all clauses are satisfied, in each copy of $G(C_j)$ ($1\leq j\leq m$) we can take eight edges. Hence,
\[\ell(G_I)\leq \nu(G_I\backslash F)=2m-1+8m=10m-1.\]

Hence, for the maximum matching $F_0$ returned by $\mathcal{B}$ we will have
\[\nu(G_I\backslash F_0)\leq (1+\varepsilon)\cdot \ell(G_I)\leq (1+\varepsilon)\cdot (10m-1). \]

\medskip

Case (II): Suppose that our instance $I$ is such that at most $\left(\frac{7}{8}+\delta\right)\cdot m$ clauses of $C$ are satisfiable by any truth assignment. Let $F$ be any perfect matching of $G_I$. Define a truth assignment $\Tilde{\gamma}=(\gamma_1,...,\gamma_n)$ as follows: $\gamma_i=TRUE$, if $F$ takes the horizontal edges of the cycle corresponding to the variable $x_i$ (Figure \ref{fig:GraphCorrespondingVariable}), and $\gamma_i=FALSE$, if $F$ takes the vertical edges of the cycle corresponding to the variable $x_i$ (Figure \ref{fig:GraphCorrespondingVariable}). We have:
\begin{equation}
    \label{eq:PerfectMatchingTruthAssignment}\nu(G_I\backslash F)=2m-1+8\cdot |SAT(\Tilde{\gamma})|+9(m-|SAT(\Tilde{\gamma})|)=11m-1-|SAT(\Tilde{\gamma})|.
\end{equation} Here $Sat(\Tilde{\gamma})$ denotes the set of clauses of $C$ satisfied by $\Tilde{\gamma}$. Since we are in Case (II), equation (\ref{eq:PerfectMatchingTruthAssignment}) implies
\[\ell(G_I)\geq 11m-1- \left(\frac{7}{8}+\delta\right)\cdot m. \]
Hence, for the perfect matching $F_0$ returned by $\mathcal{B}$ we will have
\[\nu(G_I\backslash F_0)\geq \ell(G_I)\geq 11m-1- \left(\frac{7}{8}+\delta\right)\cdot m.\]
Thus, using equality (\ref{eq:epsilondeltaPrime}), we get:
\begin{align*}
   \nu(G_I\backslash F) &\geq  11m-1- \left(\frac{7}{8}+\delta\right)\cdot m=\left(11-\frac{7}{8}-\delta\right)\cdot m-1\\
                        &=10(1+\varepsilon)\cdot m-1> 10(1+\varepsilon)\cdot m-1-\varepsilon=(1+\varepsilon)\cdot (10m-1).
\end{align*}
 Thus, using our algorithm $\mathcal{B}$ we can solve NP-hard instances from Lemma \ref{lem:3SATinapprox} by simply comparing $\nu(G_I\backslash F)$ with $(1+\varepsilon)\cdot (10m-1)$. Hence $P=NP$. The proof is complete.
\end{proof}

\section{Conclusion and future work}
\label{ConclusionSection}

In this paper, we considered the problems of construction of a maximum matching $F$ of an input graph $G$ that minimizes or maximizes the cardinality of largest matching in $G\backslash F$. We addressed these problems from the perspective of polynomial approximation. We introduced the parameters $\ell(G)$ and $L(G)$ as the minimum and maximum value of $\nu(G\backslash F)$, where $F$ is a maximum matching of $G$ and $\nu(G)$ is the matching number of $G$. In the beginning of the paper we addressed these problems from the perspective of additive approximation. Our main result states that for a small constant $c>0$, the following decision problem is NP-complete: given a graph $G$ and $k\leq \frac{|V|}{2}$, check whether there is a maximum matching $F$ in $G$, such that $|\nu(G\backslash F)-k|\leq c\cdot |V|$. Note that when $c=1$, this problem is polynomial time solvable as we observed in the beginning of the paper. 

It can be shown that in any graph $G$, we have $L(G)\leq 2\ell(G)$. Hence, any polynomial time algorithm (for example, those from \cite{Edmonds1,Edmonds2,EvenKariv,MicaliVazirani}) constructing a maximum matching of a graph is a 2-approximation algorithm for $\ell(G)$ and $\frac{1}{2}$-approximation algorithm for $L(G)$. In the paper, we complemented these observations by presenting two inapproximability results for $\ell(G)$ and $L(G)$.

From our perspective, the problem of finding the approximation threshold of $\ell(G)$ and $L(G)$ is a direction of a fruitful research.



\bibliographystyle{elsarticle-num}



\end{document}